\renewcommand{\epsilon}{\varepsilon}
\newcommand{\R}{\mathbb{R}}
\newcommand{\N}{\mathbb{N}}
\newcommand{\p}{\mathcal{P}}
\newcommand{\dist}{\mathrm{dist}}
\newcommand{\x}{{\bf x}}
\newtheorem{theorem}{Theorem}
\newtheorem{lemma}{Lemma}
\newtheorem{proposition}{Proposition}
\begin{document}

\author{Andriy Bondarenko\footnote{This work was carried out during the tenure of an ERCIM ``Alain Bensoussan'' Fellowship Programme. The research leading to these results has received funding from the European Union Seventh Framework Programme (FP7/2007-2013) under grant agreement $n\textsuperscript{o}$ 246016.} ,
Danylo Radchenko, and Maryna Viazovska}
\title{Well-separated spherical designs}
\date{}
\maketitle
\begin{abstract}
For each $N\ge C_dt^d$ we prove the existence of a well-separated
spherical $t$-design in the sphere $S^d$ consisting of $N$ points,
where $C_d$ is a constant depending only on $d$.
\end{abstract}
{\bf Keywords:} spherical designs, well-separated configurations,
topological degree, Marcinkiewicz-Zygmund inequality, area-regular partitions.\\
{\bf AMS subject classification.} 52C35, 41A55, 41A05, 41A63

\section{Introduction}
In this paper we will discuss the interrelation between several
classical optimization problems on spheres $S^d$ such as minimal
equal-weight quadratures (spherical designs), best packing problems,
and minimal energy problems. For $d=1$, a regular polygon is an optimal
configuration for all of these problems. However, for $d\ge 2$ exact solutions are known in very few cases.
Even asymptotically optimal configurations are sometimes very hard to
obtain (see for example Smale's 7th Problem~\cite{SS}).

We will prove the existence of certain configurations in $S^d$
which are spherical $t$-designs with asymptotically minimal number
of points and that simultaneously have asymptotically the best
separation property. These configurations also provide approximate
solutions for several other optimization problems.

Let $S^d=\left\{x\in\R^{d+1}:|x|=1\right\}$ be the unit sphere in
$\R^{d+1}$ equipped with the Lebesgue measure $\mu_d$ normalized by
$\mu_d(S^d)=1$. A set of points $x_1,\ldots,x_N\in S^d$ is called a
{\it spherical $t$-design} if
$$ \int_{S^d}P(x)\,d\mu_d(x)=\frac 1N\sum_{i=1}^N P(x_i) $$ for all
 polynomials in $d+1$ variables, of total degree at most
$t$. The concept of a spherical design was introduced by Delsarte,
Goethals, and Seidel~\cite{DGS}. For each $d, t\in {\mathbb N}$
denote by $N(d,t)$ the minimal number of points in a spherical
$t$-design in $S^d$. The following lower bound
\begin{equation}
\label{hh} N(d,t)\ge \begin{cases}\displaystyle
{{d+k}\choose{d}}+{{d+k-1}\choose{d}}&\text{if $ t=2k$,}\\&\\
\displaystyle   2\,{{d+k}\choose{d}}& \text{if $
t=2k+1$,}\end{cases}
\end{equation}
is proved in~\cite{DGS} (see also the classical monograph~\cite{CS}).
On the other hand, it follows from the general result by Seymour and Zaslavsky
\cite{SZ} that spherical designs exist for all positive
integers $d$ and $t$. The method of proof used in~\cite{SZ} was not constructive and authors
did't indicate an upper bound for $N(d,t)$ in terms of $d$ and $t$.
First feasible upper bounds were given by
Wagner~\cite{Wag} ($N(d,t)\le C_dt^{Cd^4}$) and Bajnok~\cite{B} ($N(d,t)\le
C_dt^{Cd^3}$). Korevaar and Meyers \cite{KM} have
improved these inequalities by showing that $N(d,t)\le
C_dt^{(d^2+d)/2}$. They have also conjectured that $N(d,t)\le
C_dt^d$. Note that \eqref{hh} implies $N(d,t)\ge c_dt^d$. Here and in what
follows we use the notations $C_d$, $L_d$, etc. ($c_d$,
$\lambda_d$, etc.) for sufficiently large (small) constants
depending only on $d$.

Korevaar and Meyers were motivated by the following problem coming
from potential theory: How to choose $N$ equally charged  points \rm
$x_1,\ldots,x_N$ in $S^2$ to minimize the value
$$
U_r(x_1,\ldots,x_N):=\sup_{|x|=r}\left|\frac 1N\sum_{i=1}^N\frac
1{|x-x_i|}-1\right|,\quad r\in (0,1)?
$$
The classical Faraday cage phenomenon states that any stable charge
distribution on the compact closed surface cancels the electric
field inside the surface. According to this model the minimal value
of $U_r$ should rapidly decay to 0, when $N$ grows.

It was shown in~\cite{KM} that if the set of points $x_1,\ldots,x_N$ is
a spherical $t$-design for some $t>cN^{1/2}$ then
$$
U_r(x_1,\ldots,x_N)\le r^{\alpha N^{1/2}}.
$$
The estimate is optimal up to the constant in the power.

Recently we have suggested a nonconstructive
approach to obtain an optimal asymptotic bound for $N(d,t)$ based on
the application of the topological degree theory; see~\cite{BRV,BV1}. We have proved the
following

\noindent {\sc {\bf Theorem A.}}\emph{ For each $N\ge C_dt^d$ there
exists a spherical $t$-design in $S^d$ consisting of $N$ points.}\\
\noindent This implies the Korevaar-Meyers conjecture.

Now we will give the definition of a well-separated sequence of
configurations. A sequence of $N$-point configurations
$X_N=\{x_{1N},\ldots,x_{NN}\}$ in $S^d$ is called well-separated if
\begin{equation}
\label{sep0}\min_{1\le i<j\le N}|x_{iN}-x_{jN}|\ge \lambda_dN^{-1/d}
\end{equation}
for some constant $\lambda_d$ and all $N\ge 2$.  The
inequality~\eqref{sep0} is optimal up to the constant $\lambda_d$.
That is, there exists a constant $L_d$ such that for any $N$-point
configuration $\{x_{1},\ldots,x_{N}\}$
$$
\min _{1\le i<j\le N}|x_i-x_j|<L_dN^{-1/d}.
$$
Many authors have predicted the existence
of well-separated spherical $t$-designs in $S^d$ of asymptotically
minimal cardinality $O(t^d)$ as $t\to\infty$ (see, e.g.~\cite{ACSW}
and~\cite{HL}). Moreover, in~\cite{HL} it was shown that if such
spherical designs exist then they have asymptotically minimal Riesz
$s$-energy. In this paper we prove the existence of above mentioned
spherical designs. Our main result is:
\begin{theorem}\label{well sep}
For each $d\ge 2$ there exist positive constants $C_d$ and
$\lambda_d$ depending only on $d$ such that for each $t\in\N$ and each  $N>C_dt^d$ there exists a spherical $t$-design in $S^d$
consisting of $N$ points $\{x_i\}_{i=1}^N$ with $|x_i-x_j|\geq \lambda_d
N^{-1/d}$ for $i\neq j$.
\end{theorem}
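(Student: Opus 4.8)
The plan is to realize the design as a tightly controlled perturbation of the centres of an area-regular partition, reusing the topological-degree mechanism behind Theorem A but confining the perturbation to tiny neighbourhoods so that the separation survives. First I would fix an area-regular partition $S^d=\bigcup_{i=1}^N R_i$ into $N$ cells of equal measure $\mu_d(R_i)=1/N$ and diameter at most $\gamma_d N^{-1/d}$, arranged so that the centroids $z_i$ are themselves separated, $|z_i-z_j|\ge\beta_d N^{-1/d}$; such partitions are standard. Inside each cell I place the small ball $D_i=\{x\in S^d:|x-z_i|\le\rho\}$ with $\rho=\epsilon_d N^{-1/d}$ for a constant $\epsilon_d$ to be fixed. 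If every node is forced to lie in its own $D_i$, then automatically $|x_i-x_j|\ge(\beta_d-2\epsilon_d)N^{-1/d}=:\lambda_d N^{-1/d}$, so the separation required in Theorem~\ref{well sep} is built in, and the whole problem reduces to producing an \emph{exact} $t$-design with one node confined to each $D_i$.

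Next I would set up the degree-theoretic machinery. Let $Q_1,\dots,Q_M$ be an $L^2(\mu_d)$-orthonormal basis of the mean-zero polynomials of degree at most $t$, of dimension $M\asymp t^d$, with associated Laplace eigenvalues $\lambda_k\asymp(\deg Q_k)^2$. A configuration is a $t$-design exactly when the error map
\[
\Phi(x_1,\dots,x_N)=\Big(\tfrac1N\textstyle\sum_{i=1}^N Q_k(x_i)\Big)_{k=1}^M
\]
vanishes. I would parametrize admissible perturbations by a single vector $a\in\R^M$, moving each node along a preconditioned correcting field, $x_i(a)=\exp_{z_i}\big(\nabla w_a(z_i)\big)$ with $w_a=\sum_k\lambda_k^{-1}a_kQ_k$ and $\nabla$ the tangential gradient. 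Integration by parts shows that the linearization of $a\mapsto\Phi(x(a))$ at $a=0$ is the identity up to a Marcinkiewicz--Zygmund error, since the discrete Gram matrix $\frac1N\sum_i\nabla Q_k(z_i)\cdot\nabla Q_l(z_i)$ approximates $\int_{S^d}\nabla Q_k\cdot\nabla Q_l\,d\mu_d=\lambda_k\delta_{kl}$. Thus $f(a):=\Phi(x(a))$ has the form $f(a)=\Phi(z)+a+E(a)$ with $E$ controlled, and choosing $R\asymp\|\Phi(z)\|$ I would verify the boundary condition $\langle f(a),a\rangle>0$ on $|a|=R$. The homotopy $sf(a)+(1-s)a$ then keeps $f$ homotopic to the identity, so its degree is nonzero and $f$ vanishes at some $a^\ast$ with $|a^\ast|\le R$, yielding an exact $t$-design.

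Here the hypothesis $N>C_dt^d$ pays off twice. Because the $z_i$ are centroids, the underlying quadrature is a centroidal midpoint rule, so a second-order Taylor estimate gives $\|\Phi(z)\|\lesssim N^{-2/d}t^2$, which is small once $C_d$ is large since $N^{-1/d}t=C_d^{-1/d}$. And the Marcinkiewicz--Zygmund inequality for the dense, well-separated centre set, valid precisely because the mesh obeys $N^{-1/d}\lesssim t^{-1}$, is what both certifies the Gram approximation above and bounds the remainder $E(a)$ throughout $|a|\le R$.

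The step I expect to be the main obstacle is the confinement estimate: one must show that $a^\ast$ (indeed every $a$ with $|a|\le R$) produces displacements $|\nabla w_a(z_i)|\le\rho=\epsilon_d N^{-1/d}$ at \emph{every} node, for an $\epsilon_d$ independent of $t$. This is a pointwise sup-norm bound on a band-limited correcting field whose $L^2$-norm is of size $\|\Phi(z)\|$, and naive Sobolev/Nikolskii embeddings give only $\|\nabla w_{a^\ast}\|_\infty\lesssim N^{-2/d}t^{d/2+1}=\epsilon_d N^{-1/d}\cdot C_d^{-1/d}t^{d/2}$, carrying a spurious factor $t^{d/2}$ that is fatal for large $t$. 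Removing this factor is the heart of the matter, and I expect it to force a more careful choice of the $M$-parameter perturbation family than the single global potential $w_a$: one wants corrections that are localized and spread across the $N\gg M$ available nodes, so that the displacement stays pointwise small while the linearization remains Marcinkiewicz--Zygmund–invertible. Obtaining the pointwise displacement bound with a $t$-independent constant, rather than merely an $L^2$ or averaged bound, is exactly the extra difficulty that separates Theorem~\ref{well sep} from Theorem A.
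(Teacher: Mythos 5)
There is a genuine gap, and it is exactly the one you flag yourself: the pointwise confinement estimate. Your scheme needs every node displacement $|\nabla w_a(z_i)|$ to be $\le\epsilon_d N^{-1/d}$ with a $t$-independent constant, but the correcting field $w_a$ is a global band-limited function controlled only in $L^2$, and passing to the sup norm via Nikolskii costs a factor $t^{d/2}$ that you cannot absorb. A zero $a^\ast$ of your degree argument therefore yields a design whose displacements are small only on average; a single node can exit its ball $D_i$, and the separation claim collapses. Your closing remark that one "wants corrections that are localized and spread across the $N\gg M$ nodes" names the right desideratum but supplies no construction, so the proposal is an honest program sketch rather than a proof. (Separately, your bound $\|\Phi(z)\|\lesssim N^{-2/d}t^2$ is optimistic: for $L^2$-normalized $Q_k$ the second-order Taylor error carries $\|\nabla^2 Q_k\|_\infty$, and summing over $M\asymp t^d$ coordinates introduces further $t$-powers, which only worsens the confinement problem.)

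The paper closes this gap by abandoning the small-perturbation/Newton frame entirely, which is the structural difference worth internalizing. Degree theory is applied not in your $a$-space but on the polynomial space $\mathcal{P}_t$ itself, with $\Omega=\{P:\int_{S^d}|\nabla P|\,d\mu_d<1\}$, so the only scale in the problem is the normalization $\int|\nabla P|=1$ on $\partial\Omega$ and no quantitative smallness of the starting error is ever needed. Each node is defined by $\x_i(P)=\gamma_{[x_i,z_i]}\bigl((1-\delta)g_\epsilon(|\nabla P(x_i)|)\bigr)$, where $z_i$ maximizes the linear functional $(\,\cdot\,,\nabla P(x_i))$ over the cell $R_i$ of a \emph{convex} area-regular partition (Proposition~\ref{area-reg+} and Lemma~\ref{conv} guarantee $z_i$ is unique and depends continuously on $P$). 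Crucially, the displacement is allowed to be \emph{large} — almost the full cell diameter — and separation is instead obtained geometrically: since $R_i$ is geodesically convex, contains a cap of radius $b_dN^{-1/d}$ about $x_i$, and $\x_i(P)$ stops a fixed fraction $\delta$ short of $z_i$, concavity of the distance-to-boundary function on the projected convex set gives $\dist(\x_i(P),\partial R_i)\ge\frac{\delta}{2}\sin(b_dN^{-1/d})$ uniformly in $P$, with no sup-norm estimate on any correcting field. The boundary positivity $\sum_i P(\x_i(P))>0$ then follows because moving toward the cell maximizer gains at least $\beta_d\|\mathcal{R}\|\int|\nabla P|$ (the cap-based estimate~\eqref{beta}), while all error terms in the four-part decomposition~\eqref{split} are bounded by small multiples of $\|\mathcal{R}\|\int|\nabla P|$ via the gradient Marcinkiewicz--Zygmund inequality (Lemma~\ref{nabla1}) — your MZ instinct is used, but to compare cell-wise gradient oscillations, not to invert a Gram matrix. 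If you want to salvage your route, the lesson is that the perturbation family must be cellwise-constrained by construction (as the paper's geodesic construction is), rather than constrained a posteriori by a sup-norm bound.
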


Theorem~\ref{well sep} is a natural generalization of Theorem A. The
paper is organized as follows. In Section~\ref{idea} we will reduce
Theorem~\ref{well sep} to the construction of a certain $N$-tuple of
maps $\x_1,\ldots,\x_N:\p_t\to S^d$. Then in Section~3 we will prove
several auxiliary results concerning area regular partitions of
sphere $S^d$ and spherical Marcinkiewicz-Zygmund type inequalities.
Finally in Section~4 we will construct the maps $\x_1,\ldots,\x_N$,
proving Theorem~1.

\section{Application of topological degree theory}\label{idea}
We will use the approach similar to that of~\cite{BRV}.
Let $\mathcal{P}_t$ be the Hilbert space of polynomials $P$ of
degree at most $t$ on $S^{d}$ such that $$
\int_{S^d}P(x)d\mu_d(x)=0,
$$
equipped with the usual inner product
$$
(P,Q)=\int_{S^d}P(x)Q(x)d\mu_d(x).
$$
By Riesz representation theorem, for each point $x\in S^d$ there
exists a unique polynomial $G_x\in \p_t$ such that
$$(G_x,Q)=Q(x) \;\;\mbox{for all}\;\;Q\in\p_t.$$
Then a set of points $x_1,\ldots,x_N\in S^d$ forms a spherical
$t$-design if and only if
\begin{equation}
\label{sph} G_{x_1}+\cdots+G_{x_N}=0.
\end{equation}
The gradient of a  differentiable function $f: \R^{d+1}\to \R$ is denoted by $$ \frac{\partial f}{\partial
x}:=\left(\frac{\partial f}{\partial \xi_1},\ldots,
\frac{\partial f}{\partial \xi_{d+1}}\right),\qquad x=(\xi_1,\ldots,\xi_{d+1}).$$

\noindent For a polynomial $Q\in\p_t$ we define the \emph{spherical
gradient}
$$\nabla Q(x):=\frac{\partial}{\partial x}\bigg( Q\Big(\frac {x}{|x|}\Big)\bigg).$$

\noindent Now we will use the following result from topological degree theory~\cite[Ths. 1.2.6 and 1.2.9]{OCC}.\\
{\sc {\bf Theorem B.}} {\it  Let $f: \R^n\to \R^n$ be a continuous map and
$\Omega$ an open bounded subset, with boundary $\partial\Omega$,
such that  $0\in\Omega\subset \R^n$. If $(x, f(x))> 0$ for all
$x\in\partial \Omega$, then there exists $x\in \Omega$ satisfying $f(x)=0$.}\\
We will apply Theorem B to the following open subset of a vector
space $\p_t$
\begin{equation}\label{omega}\Omega=\left\{P\in\mathcal{P}_t\left|\,\int_{S^d}|\nabla
P(x)|d\mu_d(x)<1\right.\right\}.
\end{equation}

Observe that if continuous maps $\x_i: \mathcal{P}_t\to S^d$,
$i=1,\ldots,N$, satisfy for all $P\in\partial\Omega$
$$\sum_{i=1}^N
P(\x_i(P))>0$$ then there exists a spherical $t$-design on $S^d$
consisting of $N$ points. To this end let us consider a map
$f:\p_t\to \p_t$ defined by
$$f(P)=
G_{\x_1(P)}+\cdots+G_{\x_N(P)}.$$ \rm Clearly
$$
(P,f(P))=\sum_{i=1}^N P(\x_i(P))
$$
for each $P\in\mathcal{P}_t$. Thus, applying Theorem B for the map
$f$, vector space $ \mathcal{P}_t$, and the subset $\Omega$ defined
in \eqref{omega} immediately gives us the existence of a polynomial
$\widetilde{P}\in\Omega$ such that $f(\widetilde{P})=0$. Hence,
by~\eqref{sph}, the images of this polynomial
$\x_1(\widetilde{P}),\ldots,\x_N(\widetilde{P})$ form a spherical
$t$-design in $S^d$ consisting of $N$ points. If additionally there
exists a constant $\lambda_d$ such that $|\x_i(P)-\x_j(P)|\geq
\lambda_d N^{-1/d}$ for all $i\neq j$, and $P\in\Omega$, then the
above mentioned spherical $t$-design is well separated, proving
Theorem~\ref{well sep}.

The maps $\x_i$, $i=1,\ldots,N$, will be constructed in Section
\ref{sep} below.

\section{Area-regular partitions and convex sets}
For $x,y\in S^d$ denote by $\dist(x,y)=\arccos((x,y))$ the geodesic
distance between $x$ and $y$. Also for a set $A\subset S^d$ define
the geodesic distance between $x$ and $A$ as follows
$$
\dist(x,A)=\inf_{y\in A}\dist(x,y).
$$
Recall that a spherical cap of radius $r$ with center at $x\in S^d$
is the set $$A(x,r)=\{z\in S^d|\,\mathrm{dist}(x,z)\leq r\}.$$

Below we will use extensively the notion of an area-regular
partition. Here is the definition.

Let $\mathcal{R}=\left\{R_1,\ldots, R_N\right\}$ be a finite
collection of closed sets $R_i\subset S^d$ such that $\cup_{i=1}^N
R_i=S^d$ and $\mu_d(R_i\cap R_j)=0$ for all $1\le i< j\le N$. The
partition $\mathcal{R}$ is called area-regular if
$\mu_d(R_i)=1/N$, $i=1,\ldots,N$. The
partition norm for $\mathcal{R}$ is defined by
$$ \|\mathcal{R}\|=\max_{R\in\mathcal{R}}\mathrm{diam\,}R=\max_{R\in\mathcal{R}}\max_{x,y\in R}\dist(x,y).$$
It is easy to prove using isodiametric inequality that each $R_i$
has diameter at least $c_dN^{-1/d}$. Therefore, $\|\mathcal{R}\|\ge
c_dN^{-1/d}$ for each $\mathcal{R}$. However for some $C_d$ and each
$N\in \N$ there exists an area-regular partition
$\mathcal{R}=\left\{R_1,\ldots, R_N\right\}$ of $S^d$ with diameter
at most $C_dN^{-1/d}$. Such area-regular partitions are used for many
optimization problems where a well distributed set of $N$ points on
a sphere having no concentration points is needed (see
e.g.~\cite{Alex},~\cite{BL},~\cite{MNW}, and~\cite{SK2}). In this
paper we need area-regular partitions of small diameter with
additional constraint of geodesic convexity. A subset $A\subset S^d$
is {\em geodesically convex} if any two points $x$, $y\in A$ can be
connected by a geodesic arc contained in $A$. The partition
$\mathcal{R}=\{R_1,\ldots,R_N\}$ is said to be {\em convex} if each
set $R_i,\;i=1,\ldots,N,$ is geodesically convex. First we will prove
the existence of convex area-regular partitions of ``small''
diameter.
\begin{proposition}\label{area-reg+} For each $N\in\N$
there exists a convex area-regular partition
$\mathcal{R}=\left\{R_1,\ldots, R_N\right\}$ such that
$\|\mathcal{R}\| \leq K_dN^{-1/d}$, where $K_d$ is a constant depending only
on~$d$.
\end{proposition}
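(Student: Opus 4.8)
The plan is to transport the problem to Euclidean space via central (gnomonic) projection, where geodesic convexity becomes ordinary convexity and equal spherical area becomes equal mass for a bounded density. The key geometric fact is that if $p\colon U\to\R^d$ denotes the central projection of an open hemisphere $U\subset S^d$ onto an affine chart, then $p$ carries the trace on $U$ of every hyperplane through the origin to an affine hyperplane, and conversely. Consequently a subset of $U$ is geodesically convex if and only if its image is convex, and the bounding facets of an axis-parallel box in the chart pull back to pieces of great subspheres; in particular every such box pulls back to a geodesically convex spherical cell. Moreover the spherical measure pulls back under $p^{-1}$ to the density $w(u)=c_d(1+|u|^2)^{-(d+1)/2}$, which on any fixed bounded region of the chart is bounded above and below by positive constants depending only on $d$.

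First I would prove the following Euclidean statement: for any box $Q\subset\R^d$, any density $w$ with $0<w_0\le w\le w_1$, and any $n\in\N$, there is a partition of $Q$ into $n$ boxes of equal $w$-mass and diameter at most $C(d,w_1/w_0)\,n^{-1/d}$. This is done by recursion: slice $Q$ orthogonally to its longest edge into $\lceil n^{1/d}\rceil$ slabs carrying as equal a number of ``units'' as possible, and recurse on dimension inside each slab (the density integrated over the thin direction is again bounded between positive constants). Because $w$ is pinched between $w_0$ and $w_1$, each slab has thickness comparable to $n^{-1/d}$, and an induction on $d$ shows every resulting box has all edge lengths comparable to $n^{-1/d}$, whence the diameter bound. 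The pieces are products of intervals, hence convex.

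To assemble the global partition I would start from the $M=2(d+1)$ spherical cells obtained by radially projecting the facets of the inscribed cube, i.e.\ the cells $\{x\in S^d:\ \pm\xi_i=\max_j|\xi_j|\}$. Each is a geodesically convex spherical polytope contained in the cap of radius $\arccos(1/\sqrt{d+1})<\pi/2$, so each is the domain of a central chart onto a cube with density bounded in terms of $d$ only. Writing $N=qM+s$ with $0\le s<M$, I assign $q+1$ cells to $s$ of the facets and $q$ cells to the rest, and then apply the Euclidean lemma in each chart to cut the corresponding facet into that many geodesically convex pieces of equal spherical area; pulling back gives $N$ convex cells of diameter $\le K_dN^{-1/d}$.

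The point that requires real care — and which I expect to be the main obstacle — is matching the areas exactly. For the cells to all have area $1/N$, the facet assigned $n_i$ pieces must itself have area exactly $n_i/N$, whereas the symmetric cube facets have area $1/M$. I would correct this by perturbing the great subspheres separating adjacent facets: tilting the seam between two facets transfers area from one to the other while keeping both geodesically convex, and since the facet-adjacency graph of the cube is connected, the corresponding first-order transfers $e_i-e_j$ span the whole zero-sum space $\{\sum_i\delta_i=0\}$. Hence the area map from the tilt parameters to the simplex of facet areas is a submersion at the symmetric configuration, and by the inverse function theorem every nearby area vector — in particular $(n_i/N)_i$, which lies within $O(1/N)$ of $(1/M)_i$ — is realized by convex facets that still tile $S^d$, once $N$ exceeds a constant $N_0(d)$. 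For the finitely many remaining values $N\le N_0(d)$ I would instead use the $N$ congruent lunes cut by half great subspheres through a common axis; these are convex and of equal area, and their diameter $\le\pi$ is absorbed into $K_d$ by enlarging the constant. Besides this area-matching step, the only other delicate estimate is the uniform-in-$n$ control of the aspect ratio of the boxes in the Euclidean lemma, which is what makes the $n^{-1/d}$ diameter bound hold for \emph{every} $n$ and not merely for perfect $d$-th powers.
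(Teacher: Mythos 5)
Up to the area-matching step your proposal reconstructs the paper's own machinery: your Euclidean box lemma (recursive equal-mass slicing under a density pinched between positive constants, with the $\lceil n^{1/d}\rceil$-slab bookkeeping) is exactly Lemma~\ref{rect}, and building the coarse partition by radially projecting the facets of a polytope inscribed in the sphere is exactly the paper's setup. The divergence, and the genuine gap, is in how you force the facet areas to be the integer multiples $n_i/N$. First, ``tilting the seam between two facets'' is not a well-defined deformation of the tiling: the seams are not independent parameters. At a ridge where three or more cells meet (e.g.\ along $\xi_1=\xi_2=\xi_3=\max_j|\xi_j|$), tilting the central hyperplane carrying the seam between cells $1$ and $2$ detaches it from the ridges it shares with the seams $(1,3)$ and $(2,3)$, creating overlaps and gaps; so the ``area map from the tilt parameters'' on which you run the inverse function theorem has, as written, no domain. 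This is repairable --- define $R_i=\{x\in S^d:\ \ell_i(x)=\max_j\ell_j(x)\}$ for linear functionals $\ell_i$ near $\pm\xi_i$, which gives a consistent tiling into geodesically convex cells, and then prove that the derivative of the resulting area map has rank $2d+1$ --- but that construction and rank computation are missing, and the submersion claim does not substitute for them.

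Second, and this step would actually fail as stated: after a generic perturbation the cells are no longer central images of axis-parallel boxes; their chart images are convex polytopes with slightly tilted facets. Your Euclidean lemma is formulated, and its induction proved, for boxes cut by facet-parallel hyperplanes, so it simply does not apply to these cells; extending it to near-boxes is not automatic, because the reduced density obtained by integrating out a coordinate inside a slab is no longer uniformly pinched near a tilted facet (the fibers degenerate to zero length), which is precisely the hypothesis your recursion needs. This is the difficulty the paper's proof is engineered to avoid: it deforms the cube only within the symmetric families $P(a_\lambda)$ (even $N$) and $Q_{\lambda,\mu}$ (odd $N$), for which every cell remains the radial image of a genuine rectangle --- for $Q_{\lambda,\mu}$ the map $\phi$ is chosen so that hyperplanes parallel to facets go to hyperplanes --- so that Lemma~\ref{rect} applies verbatim, while symmetry makes the $2d$ side facets automatically equal in area and reduces the matching problem to a one- (resp.\ two-) parameter intermediate value argument, with $[N/(2d+2)]+1$ pieces on each side facet and the remainder split between the two end facets. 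Note that your allocation $N=qM+s$ with $q+1$ pieces on $s$ arbitrary facets is itself incompatible with any such symmetric family (it requires unequal areas among the side facets), which is what forces you into generic tilts and hence into both gaps above. Since the tilts you need are only $O(1/N)$, the route can plausibly be completed (max-functional parametrization plus a near-box version of the slicing lemma with an $O(1/N)$ boundary collar), but these are missing ideas, not polish; your lune fallback for the finitely many small $N$ is fine.
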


The following construction for the sphere $S^2$ and $N=6n^2$, where
$n\in\N$, is given by Alexander in~\cite{Alex}. Let us first explain
his simple and elegant proof.

We begin with a spherical cube, and consider one of its facets. Using
$n - 1$ great circles from the pencil determined by two opposite
edges we can cut the facet into $n$ slices of equal area. Each slice
can be cut into $n$ quadrilaterals of equal area using great circles
in the pencil in the other pair of opposite edges of the face. The
diameters of the quadrilaterals are of the right magnitude.

This construction has an obvious generalization to higher
dimensions. Start with the appropriate spherical hypercube, then
divide each face into $n$ equal pieces, and so on. In this way we
obtain a convex partition of $S^d$ into $2(d+1)n^d$ parts of diameter at most $C_d/n$.

Now we generalize the approach of Alexander to prove
Proposition~\ref{area-reg+} for all $N\in \N$. For each $m\in\N$ and a
vector with positive coordinates $a=(a_1,\ldots,a_m)\in\R_+^m$
denote by $P(a)$ the $m$-dimensional rectangle
$[-a_1,a_1]\times\cdots\times[-a_m,a_m]$. Also, a measure
$d\eta(x)=\alpha(x)dx$ defined on $P(a)$ is said to be $M$-uniform
if
$$\alpha(x)\le M\alpha(y)\quad\text{for all }\, x,y\in P(a).$$ To prove
Proposition~\ref{area-reg+} we need the following lemma.
\begin{lemma}\label{rect}
Let $a\in\R_+^m$ with
$$
\max_{1\le i,j\le m}\frac{a_i}{a_j}\le B
$$
and $d\eta$ be a $M$-uniform measure defined on $P(a)$. Then for
each $N\in\N$ there exists a partition of $P(a)$ into $N$
$m$-rectangles $P_1,\ldots,P_N$ with facets parallel to the
corresponding facets of $P(a)$ such that
\begin{equation}
\label{area1} \int_{P_i}d\eta(x)=\frac 1N\int_{P(a)}d\eta(x),
\end{equation}
and
\begin{equation}
\label{area2} \mathrm{diam\,}{P_i}\le C(m,B,M)a_1N^{-1/m}
\end{equation}
for all $i=1,\ldots, N$.
\end{lemma}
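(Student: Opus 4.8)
The plan is to partition the rectangle recursively, one coordinate direction at a time, reducing the $m$-dimensional problem to a sequence of one-dimensional splitting problems. Because the measure $d\eta$ is $M$-uniform (its density varies by at most a bounded factor), cutting along hyperplanes parallel to the facets distributes the mass in a way that is comparable to the way it distributes Lebesgue volume, and this is the mechanism that will keep the pieces from becoming too elongated.

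First I would factor the integer $N$ (or rather, choose integers) $N = n_1 n_2 \cdots$ in a near-balanced way. Concretely, the first step is to slice $P(a)$ into $k$ slabs by hyperplanes orthogonal to the $x_1$-axis, each slab carrying mass exactly $\frac{1}{k}\int_{P(a)} d\eta$; this is always possible by the intermediate value theorem applied to the continuous cumulative mass function in the $x_1$-direction. The key quantitative point is that, by $M$-uniformity, a slab of $\eta$-mass $\frac1k$ has $x_1$-width between $\frac{2a_1}{Mk}$ and $\frac{2a_1 M}{k}$, so all slab widths are comparable to $a_1/k$ up to the factor $M$. One then recurses inside each slab, which is itself a rectangle on which the restricted measure is still $M$-uniform (uniformity is inherited by subrectangles), splitting next in the $x_2$-direction, and so on. Choosing the number of cuts in direction $j$ to be roughly $N^{1/m}$ in each of the $m$ directions (handling the rounding by letting the factors $n_j$ differ by at most one, so that $\prod n_j = N$ and each $n_j \asymp N^{1/m}$) guarantees the mass condition \eqref{area1} exactly by construction.

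For the diameter bound \eqref{area2}, I would track the side lengths of a typical piece $P_i$ after all the cuts. In direction $1$ the side length is at most $\frac{2a_1 M}{n_1} \le C M a_1 N^{-1/m}$. In direction $j$, the cuts are made inside a subrectangle whose $x_j$-extent is at most the full $2a_j \le 2B a_1$; applying the same $M$-uniform slicing estimate there bounds the $x_j$-side of $P_i$ by $C M a_j / n_j \le C M B a_1 N^{-1/m}$. Summing the squares of the $m$ side lengths and taking the square root yields $\operatorname{diam} P_i \le C(m,B,M)\, a_1 N^{-1/m}$, which is exactly the claimed bound with an explicit constant depending only on $m$, $B$, and $M$.

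The main obstacle I anticipate is bookkeeping the compounding of the constant $M$ through the recursion: each successive coordinate split is performed on the measure restricted to an already-cut slab, and I must verify that this restriction remains $M$-uniform with the \emph{same} $M$ (not a power $M^m$) so that the final constant stays bounded by a function of $m, B, M$ alone. This works precisely because restricting a density whose values lie in a band $[\,m_0, M m_0\,]$ to any subrectangle only shrinks the band, so the uniformity constant never deteriorates; the delicate part is stating and using this inheritance cleanly at each recursive stage. A secondary, purely arithmetic nuisance is the integer rounding of $N^{1/m}$ into the factors $n_1,\dots,n_m$, but since $N$ need not factor nicely one simply chooses $n_j$ with $\prod_j n_j = N$ and $|n_j - N^{1/m}| \le 1$ after absorbing the discrepancy, which changes the constant only by a bounded factor.
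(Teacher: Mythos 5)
Your recursive slicing scheme is the right general idea and matches the paper's strategy (induction over coordinate directions, intermediate value theorem for the cuts, $M$-uniformity to compare slab widths with $a_j/k$, then summing side lengths for the diameter). Your worry about the uniformity constant is also essentially a non-issue, exactly as you say: restriction to a subrectangle, and likewise passing to the marginal density $\alpha_i(x_1,\ldots,x_{l-1})=\int_{t_{i-1}}^{t_i}\alpha\,dx_l$, keeps the uniformity constant under control (the paper generously allows $2M^2$), and this does not compound dangerously through the recursion.

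The genuine gap is the arithmetic step you dismissed as a ``nuisance.'' You cannot, for an arbitrary $N\in\N$, choose integers $n_1,\ldots,n_m$ with $\prod_j n_j=N$ and each $n_j\asymp N^{1/m}$: take $N$ prime and $m=2$, where the only factorizations are $1\times N$ and $N\times 1$. More structurally, any partition of the grid type you describe --- a fixed number of slabs in direction $1$, each sliced into the \emph{same} number of pieces in direction $2$, and so on --- produces exactly $\prod_j n_j$ pieces, so equal mass $1/N$ for all $N$ is unattainable within that scheme. The paper's remedy is to abandon the product structure: set $k=[N^{1/l}]$, $s=[N/k]$, $r=N-ks$, give $r$ of the $k$ slabs mass $(s+1)/N$ and the remaining $k-r$ slabs mass $s/N$ (so the slabs are \emph{not} of equal mass), and then apply the inductive hypothesis in dimension $l-1$ with $N_i=s+1$ or $N_i=s$ pieces inside the $i$-th slab. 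Each piece then has mass $\frac{1}{N_i}\cdot\frac{N_i}{N}=\frac1N$ exactly, and since $N_i\asymp N^{(l-1)/l}$ one still gets $\mathrm{diam\,}P_{i,j}\le C a_1 N_i^{-1/(l-1)}\le C' a_1 N^{-1/l}$, so the diameter bookkeeping you did survives unchanged. Your proof becomes correct once you replace the balanced factorization claim by this ``unequal slab counts'' device.
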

\begin{proof}
We will prove the lemma by induction on $m$. For $m=1$, first we choose a point $t_1\in[-a_1,a_1]$ such that
$$
\int_{-a_1}^{t_1}d\eta(x)=\frac 1N\int_{-a_1}^{a_1}d\eta(x).
$$
Then similarly we choose $t_2\in(t_1,a_1]$ such that
$$
\int_{t_1}^{t_2}d\eta(x)=\frac 1N\int_{-a_1}^{a_1}d\eta(x)
$$
and so on. Finally, we get the partition of $P(a)=[-a_1,a_1]$ into $N$
segments $P_1=[-a_1,t_1]$, $P_2=[t_1,t_2]$,$\ldots$,
$P_N=[t_{N-1},a_1]$ satisfying~\eqref{area1} by its definition.
Moreover, $M$-uniformity of $\eta$ implies~\eqref{area2} with
$C(1,B,M)=2M$. Assume that the lemma is true for $m=l-1$. Let us
prove it for $m=l$. Put $k=[N^{1/l}]$, $s=[N/k]$, and $r= N-ks$.
Also for a fixed $a=(a_1,\ldots,a_l)\in\R_+^l$ denote by $P_1(a)$
the rectangle $[-a_1,a_1]\times\ldots\times[-a_{l-1},a_{l-1}]$. To
obtain a required partition of $P(a)$ first we choose step by step
points $-a_l=t_0<t_1<\ldots<t_{k-1}<t_{k}=a_l$ such that
\begin{equation*}
\int_{[t_{i-1},t_i]\times P_1(a)}d\eta(x)=\frac
{s+1}N\int_{P(a)}d\eta(x)
\end{equation*}
for all $i=1,\ldots,r$, and
\begin{equation*}
\int_{[t_{i-1},t_i]\times P_1(a)}d\eta(x)=\frac
{s}N\int_{P(a)}d\eta(x)
\end{equation*}
for all $i=r+1,\ldots,k$. Clearly,
\begin{equation}
\label{area2.45} |t_i-t_{i-1}|\le \frac{2(s+1)}NMa_l\le
4N^{-1/l}BMa_1.
\end{equation}
Consider the following measures on $P_1(a)$
$$
d\eta_i(x)=\alpha_i(x)dx,
$$
where
\begin{equation}
\label{area2.5}
\alpha_i(x)=\alpha_i(x_1,\ldots,x_{l-1})=\int_{t_{i-1}}^{t_i}\alpha(x_1,\ldots,x_{l})dx_l
\end{equation}
for all $(x_1,\ldots,x_{l-1})\in P_1(a)$ and $i=1,\ldots,k$. Clearly,
each $\eta_i$ is $2M^2$-uniform. Hence, by induction assumption for
each $i=1,\ldots,k$ and $N_i\in\N$ there exists a partition of $P_1$
into $N_i$ rectangles $P_{i,1},\ldots,P_{i,N_i}$ with facets
parallel to the corresponding facets of $P_1(a)$ such that
\begin{equation}
\label{area3} \int_{P_{i,j}}d\eta_i(x)=\frac
1{N_i}\int_{P_1(a)}d\eta_i(x),
\end{equation}
and
\begin{equation}
\label{area4} \mathrm{diam\, }{P_{i,j}}\le
C(l-1,B,M)a_1N_j^{-1/(l-1)}
\end{equation}
for all $j=1,\ldots,N_i$. Choose $N_i=s+1$ for $i=1,\ldots,r$ and
$N_i=s$ for $i=r+1,\ldots,k$. Consider the following partition of
$P(a)$ into $N$ rectangles
$$
P_{i,j}\times [t_{i-1},t_i],\quad i=1,\ldots,k,\,j=1,\ldots ,N_i.
$$
By~\eqref{area2.5} and~\eqref{area3} we immediately get that
$$
\int_{P_{i,j}\times
[t_{i-1},t_i]}d\eta(x)=\int_{P_{i,j}}d\eta_i(x)=\frac
1{N_i}\int_{P_1(a)}d\eta_i(x)=\frac1{N}\int_{P(a)}d\eta(x)
$$
for all $i=1,\ldots,k$ and $j=1,\ldots,N_i$. So, for this
partition~\eqref{area1} holds. Finally, combining~\eqref{area2.45}
with~\eqref{area4} we get~\eqref{area2} for some constant
$C(l,B,M)$. Lemma~\ref{rect} is proved.
\end{proof}
Now we are ready to construct the required convex area-regular
partitions.

\begin{proof}[Proof of Proposition~\ref{area-reg+}:]
We may assume that $N>8d^2$. First we consider the case when $N$ is even. For a $(d+1)$-rectangle $P(a)$, $a\in S^d$, denote
by
 $F_{2i-1}(a)$ its facet $x_i=a_i$ and by $F_{2i}(a)$ its facet
 $x_i=-a_i$, $i=1,\ldots,d+1$. One can naturally associate with $P(a)$ a convex partition
$\left\{R_1(a),\ldots, R_{2d+2}(a)\right\}$ of $S^d$, where
$R_i(a)=g(F_i(a))$, and $g(x)=x/|x|$ for all
$x\in\R^{d+1}\setminus\{0\}$.

Consider a one-parametric family of $(d+1)$-rectangles $P(a_\lambda)$,
where
$$
a_\lambda=\Big(\lambda,\sqrt{\frac{1-\lambda^2}d},\ldots,\sqrt{\frac{1-\lambda^2}d}\Big).
$$
Now we will choose such a $\lambda=\lambda(N)$ that our required
convex area-regular partition could be obtained as a subpartition of
$\left\{R_1(a_\lambda),\ldots, R_{2d+2}(a_\lambda)\right\}$.
Consider the function $G(\lambda)=\mu_d(R_1(a_\lambda))$. Clearly,
$G(1/\sqrt{d+1})=1/(2d+2)$ (in this case $P(a_\lambda)$ is a hypercube). On
the other hand $R_1(a_\lambda)$ is contained in the spherical cap $A((1,0,\ldots,0),\arccos\lambda)$. Therefore, we can estimate $G(\lambda)$ from above as
$$G(\lambda)\le\mu_d(A((1,0,\ldots,0),\arccos\lambda)).$$
Below we will use the following inequalities: for all $d\in\N$ and $N>8d^2$
$$
\mu_d(A((1,0,\ldots,0),\arccos(1-1/10d))\le\frac{N/2-d([N/(2d+2)]+1)}N\le\frac1{2d+2}.
$$
The left hand side inequality is very rough. We need this inequality with any constant strictly less than $1$ and depending only on $d$ in place of $1-1/10d$.
Now, by continuity of $G$ there
exists
\begin{equation} \label{lambda}
\lambda\in [1/\sqrt{d+1}, 1-1/10d] \end{equation}
such that
$$
G(\lambda)=\frac{N/2-d([N/(2d+2)]+1)}N.
$$
By symmetry arguments $\mu_d(R_1(a_\lambda))=\mu_d(R_2(a_\lambda))$
and
$$
\mu_d(R_3(a_\lambda))=\ldots=\mu_d(R_{2d+2}(a_\lambda))=\frac{[N/(2d+2)]+1}N.
$$
For each $i=1,\ldots, 2d+2$ consider the unique measure $\eta_i$ on $F_i(a_\lambda)$ such that
$\eta_i(E)=\mu_d(g(E))$ for each measurable set $E\subset F_i(a_\lambda)$ (this is indeed a measure, since $g$ is one-to-one).
Clearly,~\eqref{lambda} implies that each $\eta_i$ is $M_d$-uniform for large enough $M_d$.
Choose
$$
N_i=\frac{N/2-d([N/(2d+2)]+1)}N\quad\text{for } i=1,2$$ and
$$
N_i=\frac{[N/(2d+2)]+1}N\quad\text{for } i=3,\ldots,2d+2.
$$
Now applying Lemma~\ref{rect} for each $d$-rectangle
$F_i(a_\lambda)$ with measure $\eta_i$, $i=1,\ldots, 2d+2$ we can
get corresponding partition of $F_i(a_\lambda)$ into $N_i$ rectangles
$P_{i,j}$ such that
$$
\int_{P_{i,j}}d\eta_i(x)=\frac 1{N_i}\int_{F_i(a_\lambda)}d\eta_i(x)
$$
for $j=1,\ldots,N_i$. Moreover,
\begin{equation}
\label{area_diam}\mathrm{diam\,}{P_{i,j}}\le C_dN^{-1/d}.
\end{equation}
By its definition $\mu_d(g(P_{i,j}))=1/N$, $i=1,\ldots,2d+2$,
$j=1,\ldots, N_i$. Now we observe that each $g(P_{i,j})$ is a geodesically convex
closed set. Indeed, the image under the map $g$ of a line segment contained in $F_i$ is a geodesic arc on sphere $S^d$. Therefore, the image of the convex set $P_{i,j}$  is geodesically convex. Finally, the estimate
$\mathrm{diam\,}{g(P_{i,j})}\le C_dN^{-1/d}$ follows
from~\eqref{lambda} and~\eqref{area_diam}.

Now it remains to prove the proposition in the case when $N$ is odd. To this end we apply the same argument, with only difference that we replace $(d+1)$-rectangles $P(a_\lambda)$ by another family of polytopes $Q_{\lambda,\mu}$. Namely, for $\lambda,\mu\in(0,1)$ let $Q_{\lambda,\mu}$ be the convex hull of $2^{d+1}$ vertices $$\Bigg(\lambda,\pm\sqrt{\frac{1-\lambda^2}d},\ldots,\pm\sqrt{\frac{1-\lambda^2}d}\Bigg),
\quad\Bigg(-\mu,\pm\sqrt{\frac{1-\mu^2}d},\ldots,\pm\sqrt{\frac{1-\mu^2}d}\Bigg). $$
Consider the map $\phi:[-1,1]^{d+1}\to Q_{\lambda,\mu}$ given by
$$\phi(t_1,t):=\Bigg(t_1\cdot\frac{\lambda+\mu}{2}+
\frac{\lambda-\mu}{2},\bigg(t_1\cdot\frac{\sqrt{1-\lambda^2}+\sqrt{1-\mu^2}}{2\sqrt{d}}+
\frac{\sqrt{1-\lambda^2}-\sqrt{1-\mu^2}}{2\sqrt{d}}\bigg)t\Bigg), $$
where $t_1\in[-1,1]$ and $t\in[-1,1]^d$. Let $F_1,\ldots, F_{2d+2}$ be the facets of $[-1,1]^{d+1}$ numbered as before.
As in the case of even $N$ we can choose $\lambda\in [1/\sqrt{d+1}, 1-1/10d]$ such that
$$
\mu_d(g\circ\phi(F_1))=\frac{(N-1)/2-d([N/(2d+2)]+1)}N
$$
and then choose
$\mu\in [1/\sqrt{d+1}, 1-1/10d]$
such that
$$
\mu_d(g\circ\phi(F_2))=\frac{(N+1)/2-d([N/(2d+2)]+1)}N.
$$
Now by the symmetry argument for $i=3,\ldots,2d+2$
$$
\mu_d(g\circ\phi(F_i))=\frac{[N/(2d+2)]+1}N.
$$
Consider the pull-back measures $\eta_i$ on $F_i$ defined by $\eta_i(E):=\mu_d\big(g\circ\phi(E)\big)$ for any measurable subset $E\subset F_i$ (this is a well-defined measure, since $g\circ\phi$ is a.e. one-to-one). Clearly, each $\eta_i$ is $M_d$-uniform for large enough $M_d$, $i=1,\ldots,2d+2$. Applying again Lemma~\ref{rect} to the measures $\eta_i$  we get the corresponding area-regular partition of $S^d$. Also, the map $\phi$ has a useful property that the image of a hyperplane parallel to a facet of $[-1,1]^{d+1}$ is again a hyperplane. Therefore the partition is convex. Finally Lemma~\ref{rect} provides that the diameter of this partition is at most $K_dN^{-1/d}$ for some $K_d$ large enough.
\end{proof}

\noindent {\bf Remark.} The fact that $\mathcal{R}$ is convex easily
implies that each $R_i$, $i=1,\ldots,N$, contains a spherical cap of
radius $b_d N^{-1/d}$.\\  The following Theorem C states that an
arbitrary large enough and well distributed set of points is
``almost'' an equal weight quadrature formula in $S^d$;
see~\cite[Theorem 3.1]{MNW}.

\noindent {\sc {\bf Theorem C.}} {\em There exist constants $r_d>0$ and
$B_d>0$ such that for each  integer $m>B_d$, each $\eta\in(0,1)$, an
arbitrary convex area-regular partition
$\mathcal{R}=\{R_1,\ldots,R_N\}$ with
$\|\mathcal{R}\|<\eta\frac{r_d}m$, and each collection of points
$x_i\in R_i$, $i=1,\ldots,N$, the following inequalities
\begin{equation}
\label{Mhaskar}
(1-\eta)\int_{S^d}|P(x)|d\mu_d(x)\le\frac1N\sum_{i=1}^N|P(x_i)|\le(1+\eta)\int_{S^d}|P(x)|d\mu_d(x),
\end{equation}
hold for all polynomials $P$ of total degree at most $m$.}

To prove Theorem~\ref{well sep} we need the following lemma.

\begin{lemma}\label{nabla1}For each $\eta\in(0,1)$, an arbitrary convex area-regular partition $\mathcal{R}=\{R_1,\ldots,R_N\}$
with $\|\mathcal{R}\|<\eta\frac{r_{d}}{m+1}$, and any two collections of
points $x_i,\;y_i\in R_i$, $i=1,\ldots,N$, the following
inequalities
\begin{equation}
\label{Mhaskar-mod0}\frac 1N \sum_{i=1}^{N}|\,\nabla P(x_i)-\nabla
P(y_i)|\leq \, 8d\eta \,\int_{S^d} |\,\nabla P(x)|\,d\mu_d(x),
\end{equation}
\begin{equation}
\label{Mhaskar-mod} (1-8d\eta )\int_{S^d}|\,\nabla
P(x)|\,d\mu_d(x)<\frac1N\sum_{i=1}^N|\,\nabla
P(x_i)|<(1+8d\eta)\int_{S^d}|\,\nabla P(x)|\,d\mu_d(x),
\end{equation}
hold for all polynomials $P$ of total degree $m\ge B_d$.
The constants $r_d$ and $B_d$ are given by Theorem C.
\end{lemma}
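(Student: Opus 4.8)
The plan is to linearise the Euclidean norm that appears in both inequalities, so that Theorem C — which only sees scalar polynomials — can be brought to bear. Write the spherical gradient in coordinates as $\nabla P=(g_1,\dots,g_{d+1})$. A direct computation gives $g_k(x)=\partial_{\xi_k}P(x)-\xi_k\sum_{j}\xi_j\partial_{\xi_j}P(x)$ on $S^d$, so each $g_k$ is the restriction to $S^d$ of a polynomial of total degree at most $m+1$. The linearisation I would use is the elementary identity
$$\int_{S^d}|(w,v)|\,d\mu_d(v)=\kappa_d\,|w|,\qquad w\in\R^{d+1},$$
with $\kappa_d=\int_{S^d}|v_1|\,d\mu_d(v)$, which follows from rotational invariance. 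Applied to $w=\nabla P(x)$ it gives $|\nabla P(x)|=\kappa_d^{-1}\int_{S^d}|Q_v(x)|\,d\mu_d(v)$, where for each fixed $v$ the function $Q_v(x):=(\nabla P(x),v)=\sum_k v_kg_k(x)$ is a scalar polynomial in $x$ of degree at most $m+1$.

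With this in hand \eqref{Mhaskar-mod} is almost immediate, in fact with the sharper constant $\eta$ in place of $8d\eta$. Since $\deg Q_v\le m+1$, $m+1>B_d$, and $\|\mathcal{R}\|<\eta\frac{r_d}{m+1}$, Theorem C applies to each $Q_v$ with parameter $\eta$ at degree $m+1$, giving $(1-\eta)\int_{S^d}|Q_v|\,d\mu_d\le \frac1N\sum_i|Q_v(x_i)|\le(1+\eta)\int_{S^d}|Q_v|\,d\mu_d$. Integrating in $v$ and dividing by $\kappa_d$, Fubini turns $\kappa_d^{-1}\int_v\frac1N\sum_i|Q_v(x_i)|$ into $\frac1N\sum_i|\nabla P(x_i)|$ and $\kappa_d^{-1}\int_v\int_x|Q_v|$ into $\int_{S^d}|\nabla P|$, yielding $(1-\eta)\int|\nabla P|\le\frac1N\sum_i|\nabla P(x_i)|\le(1+\eta)\int|\nabla P|$. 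Since $8d\eta\ge\eta$ this implies \eqref{Mhaskar-mod}.

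For \eqref{Mhaskar-mod0} the same linearisation reduces matters to a difference estimate for the scalar $Q_v$, and here I would genuinely use that the cells are geodesically convex and of small diameter. Fix $v$; as $x_i,y_i$ lie in the geodesically convex $R_i$, the geodesic joining them stays in $R_i$, and the mean value theorem produces $\xi_i=\xi_i(v)\in R_i$ with $|Q_v(x_i)-Q_v(y_i)|\le \dist(x_i,y_i)\,|\nabla Q_v(\xi_i)|\le\|\mathcal{R}\|\,|\nabla Q_v(\xi_i)|$. Averaging the vectors $\nabla Q_v(\xi_i)$ over directions exactly as above and applying Theorem C once more — now to the degree $\le m+2$ polynomials $u\mapsto(\nabla Q_v(\cdot),u)$, which is legitimate because $\|\mathcal{R}\|<\eta\frac{r_d}{m+1}=\eta\frac{m+2}{m+1}\cdot\frac{r_d}{m+2}$ — gives $\frac1N\sum_i|\nabla Q_v(\xi_i)|\le(1+\eta')\int_{S^d}|\nabla Q_v|$ with $\eta'=\eta\frac{m+2}{m+1}<2\eta$. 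Integrating in $v$ yields
$$\frac1N\sum_i|\nabla P(x_i)-\nabla P(y_i)|\le\|\mathcal{R}\|\,(1+\eta')\,\kappa_d^{-1}\!\int_{S^d}\!\!\int_{S^d}|\nabla Q_v(x)|\,d\mu_d(v)\,d\mu_d(x).$$

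The remaining, and I expect main, point is to absorb the factor $\|\mathcal{R}\|^{-1}\sim(m+1)/(\eta r_d)$: the inner integral must be bounded by a dimensional constant times $(m+1)\int_{S^d}|\nabla P|$. This is precisely a tangential $L^1$ Bernstein inequality on the sphere, $\int_{S^d}|\nabla Q_v|\,d\mu_d\le c_d(m+1)\int_{S^d}|Q_v|\,d\mu_d$, used for each $v$ and then integrated via $\kappa_d^{-1}\int_v\int_x|Q_v|=\int_x|\nabla P|$. The factor $m+1$ it supplies cancels the $(m+1)^{-1}$ hidden in $\|\mathcal{R}\|$, leaving $\frac1N\sum_i|\nabla P(x_i)-\nabla P(y_i)|\le 2c_dr_d\,\eta\int_{S^d}|\nabla P|$; after enlarging $B_d$ if necessary the dimensional constant $2c_dr_d$ is at most $8d$, which is \eqref{Mhaskar-mod0}. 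For $\eta\ge 1/(4d-1)$ both inequalities follow trivially from the triangle inequality together with the crude form of \eqref{Mhaskar-mod}, so one may assume $\eta$ small, which also guarantees $\eta'<1$. Thus the hard part is not the linearisation but the Bernstein step: producing the extra derivative factor $m+1$ with a constant depending only on $d$, which is exactly what makes the bound proportional to $\eta$ rather than $O(1)$.
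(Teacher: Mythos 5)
Your overall architecture is sound but follows a genuinely different route from the paper, and it has one concrete flaw at the end. The paper also reduces to scalar polynomials, but componentwise rather than via your linearisation: it writes $|\nabla P|=\sqrt{P_1^2+\cdots+P_{d+1}^2}$ with $\deg P_j\le m+1$ and bounds $\frac1N\sum_i|Q(x_i)-Q(y_i)|$ for each component $Q=P_j$ by a sign-splitting trick. On cells where $Q$ keeps one sign, $|Q(x_i)-Q(y_i)|$ is written as $|Q(z_i)|-|Q(t_i)|$ with $z_i,t_i\in R_i$; on cells where $Q$ changes sign one picks $t_i\in R_i$ with $Q(t_i)=0$ (connectivity, supplied by convexity), so that altogether
$$\frac1N\sum_{i=1}^N|Q(x_i)-Q(y_i)|\le\frac2N\sum_{i=1}^N|Q(z_i)|-\frac2N\sum_{i=1}^N|Q(t_i)|\le 2\big((1+\eta)-(1-\eta)\big)\int_{S^d}|Q|\,d\mu_d=4\eta\int_{S^d}|Q|\,d\mu_d,$$
by applying Theorem C twice, to the \emph{same} polynomial at two point collections. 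The smallness $O(\eta)$ thus comes from the gap between the upper and lower Marcinkiewicz--Zygmund constants, and no derivative of $Q$ ever appears. That is exactly the device that lets the paper avoid the step you correctly flagged as the hard one: your route (mean value theorem along geodesics plus a second round of Theorem C) produces $\|\mathcal{R}\|\int|\nabla Q_v|$ and must buy back the factor $m$ through an $L^1$ tangential Bernstein inequality on the sphere. That inequality is true and citable (e.g.\ via the angular derivatives $D_{ij}$, for which $L^p$ Bernstein is classical, together with $|\nabla Q|^2=\sum_{i<j}(D_{ij}Q)^2$ on $S^d$), but you neither prove nor reference it, and it is a substantially heavier input than anything the paper uses. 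On the plus side, your proof of \eqref{Mhaskar-mod} via the identity $\kappa_d|w|=\int_{S^d}|(w,v)|\,d\mu_d(v)$ is clean and actually sharper than the paper's, which derives \eqref{Mhaskar-mod} from \eqref{Mhaskar-mod0} and the integral mean value theorem and therefore only gets $(1\pm 8d\eta)$ where you get $(1\pm\eta)$; your reduction to small $\eta$ via the threshold $\eta\ge 1/(4d-1)$ is also correct.

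The genuine flaw is your last step of constant bookkeeping: the Bernstein route yields $\frac1N\sum_i|\nabla P(x_i)-\nabla P(y_i)|\le C\,c_dr_d\,\eta\int_{S^d}|\nabla P|\,d\mu_d$, and \emph{enlarging $B_d$ cannot make $c_dr_d\le 8d$} --- raising the minimal degree only improves the harmless factor $(m+2)/(m+1)$, while $c_d$ (Bernstein) and $r_d$ (Theorem C) are fixed dimensional constants you do not control. So the stated constant $8d\eta$ in \eqref{Mhaskar-mod0} is not obtained by your argument as written; you prove the lemma with $8d$ replaced by an uncontrolled $C_d$ (equivalently, one could shrink $r_d$, under which Theorem C trivially remains valid, but that alters the hypothesis of the lemma as stated). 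This is immaterial for the application --- Lemma \ref{M_x} only needs \emph{some} dimensional constant, with $\eta,\delta,\epsilon$ re-chosen accordingly --- but as a proof of Lemma \ref{nabla1} verbatim it falls short, whereas the paper's telescoping argument gives $4\eta$ per component and hence $4(d+1)\eta\le 8d\eta$ exactly.
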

\begin{proof}
First we will prove~\eqref{Mhaskar-mod0}. Since $|\nabla
P|=\sqrt{P_1^2+\ldots+P_{d+1}^2}$, where $P_i\in \mathcal{P}_{m+1}$
for $i=1,\ldots,d+1$,  we have
\begin{equation}
\label{eq0} \frac 1N \sum_{i=1}^{N}|\nabla P(x_i)-\nabla
P(y_i)|\le\frac 1N
\sum_{i=1}^{N}\sum_{j=1}^{d+1}|P_j(x_i)-P_j(y_i)|.
\end{equation}
Now, for each $Q\in\mathcal{P}_{m+1}$ we will estimate the value
$$
\frac 1N \sum_{i=1}^{N}|Q(x_i)-Q(y_i)|.
$$
Let $I_1$ be the set of indexes $i=1,\ldots,N$ such that the value
$Q(x)$ has the same sign for all $x\in R_i$, and $I_2$ be the set of
all other indexes, that is the set of $i=1,\ldots,N$, for which
there exists a point $x\in R_i$ with $Q(x)=0$. Let $I_3$ be the set
of indexes $i=1,\ldots,N$ such that $|Q(x_i)|\ge |Q(y_i)|$. Put
$z_i:=x_i$, if $i\in I_3$ and $z_i:=y_i$ otherwise. Put $t_i:=y_i$,
if $i\in I_1\cap I_3$, and $t_i:=x_i$, if $i\in I_1\setminus I_3$.
For $i\in I_2$, let $t_i$ be a point in $R_i$ such that $Q(t_i)=0$.
We have
$$
\frac 1N \sum_{i=1}^{N}|Q(x_i)-Q(y_i)|=\frac 1N \sum_{i\in
I_1}|Q(z_i)|-\frac 1N\sum_{i\in I_1}|Q(t_i)|+\frac 1N\sum_{i\in
I_2}|Q(x_i)-Q(y_i)|
$$
$$
\le\frac 1N \sum_{i\in I_1}|Q(z_i)|-\frac 1N\sum_{i\in I_1}|Q(t_i)|+
\frac 2N\sum_{i\in I_2}|Q(z_i)|-\frac 2N\sum_{i\in I_2}|Q(t_i)|
$$
$$
\le\frac 2N\sum_{i=1}^N|Q(z_i)|-\frac 2N\sum_{i=1}^N|Q(t_i)|,
$$
where $z_i$, $t_i\in R_i$, $i=1,\ldots,N$. Thus, by~\eqref{Mhaskar}
we have
$$
\frac 1N
\sum_{i=1}^{N}|Q(x_i)-Q(y_i)|\le 4\eta\int_{S^d}|Q(x)|d\mu_d(x).
$$
So, the inequality~\eqref{eq0} implies
$$
\frac 1N \sum_{i=1}^{N}|\nabla P(x_i)-\nabla P(y_i)|\le 4\eta\
\sum_{j=1}^{d+1}\int_{S^d}|\nabla P_j(x)|d\mu_d(x)\le
8d\eta\int_{S^d}|\nabla P(x)|d\mu_d(x).
$$
This proves~\eqref{Mhaskar-mod0}. Now by the mean value theorem
there exist $y_i\in R_i$ such that
$$\frac 1N|\nabla
P(y_i)|=\int_{R_i}|\nabla P(x)|d\mu_d(x),\quad i=1,\ldots,N.
$$
Finally, we obtain the inequality~\eqref{Mhaskar-mod}
from~\eqref{Mhaskar-mod0} and the following easy inequalities
$$
\frac1N\sum_{i=1}^N|\nabla P(y_i)|-\frac1N\sum_{i=1}^N|\nabla
P(x_i)-\nabla P(y_i)|\le\frac1N\sum_{i=1}^N|\nabla P(x_i)|
$$
$$
\le\frac1N\sum_{i=1}^N|\nabla P(y_i)|+\frac1N\sum_{i=1}^N|\nabla
P(x_i)-\nabla P(y_i)|.
$$
\end{proof}

The following lemma is crucial to construct the maps
$\x_1,\ldots,\x_N: \mathcal{P}_t\to S^d$ in the next section.

\begin{lemma}\label{conv}For  $x\in S^d$ denote by $T_{x}$ the space of all vectors $y\in \R^{d+1}$ with $(x,y)=0$.
 Let $R\subset S^d$ be a closed geodesically convex set with $\mathrm{diam\,} R<\pi/2$.
 Then for each interior point $x\in R$ and $y\in T_{x}\setminus \{0\}$ the following holds:\\
(i) there exists a unique $x_{\mathrm{max}}\in R$ with $(x_{\mathrm{max}},y)=\max_{z\in R} (z,y)$;\\
(ii) the map $M_{x}:T_{x}\setminus \{0\}\to R$ given by $y\to x_{\mathrm{max}}$ is continuous on $T_{x}\setminus \{0\}$;\\
(iii) for each $w\in R$ and a geodesic $\gamma:[0,1]\to R$ with
$\gamma(0)=x_{\mathrm{max}}$, $\gamma(1)=w$ the function
$(y,\gamma(h))$ is decreasing on $[0,1]$.
\end{lemma}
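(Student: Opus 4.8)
The plan is to exploit the elementary fact that along any great-circle geodesic $\gamma(s)=\cos s\,p+\sin s\,u$ (with $|p|=|u|=1$ and $(p,u)=0$) the function $\psi(s):=(\gamma(s),y)$ satisfies $\psi''=-\psi$; equivalently $\psi(s)=a\cos s+b\sin s$ with $a=(p,y)$ and $b=(u,y)$. First I record existence in (i): $R$ is compact and $z\mapsto(z,y)$ is continuous, so $m:=\max_{z\in R}(z,y)$ is attained. The crucial observation is that $m>0$. Indeed, since $x$ is an interior point of $R$ and $y\in T_x\setminus\{0\}$ is tangent at $x$, the geodesic $\sigma(s)=\cos s\,x+\sin s\,(y/|y|)$ lies in $R$ for all small $s>0$ and satisfies $(\sigma(s),y)=|y|\sin s>0$, whence $m>0$.

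For uniqueness, suppose $x_1\neq x_2$ both attain the maximum. Since $\mathrm{diam}\,R<\pi/2<\pi$, geodesic convexity joins them inside $R$ by the minimizing great-circle arc, of length $L=\dist(x_1,x_2)<\pi/2$; parametrize it as $\gamma(s)=\cos s\,x_1+\sin s\,u$, $s\in[0,L]$. From $\psi(0)=\psi(L)=m$ one solves for $b=(u,y)$ and finds $\psi(s)=\frac{m}{\cos(L/2)}\cos\!\big(s-\tfrac L2\big)$. Since $L/2<\pi/4$, this is strictly positive and strictly concave on $[0,L]$, so its midpoint value $\psi(L/2)=m/\cos(L/2)>m$ exceeds $m$. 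As $\gamma(L/2)\in R$, this contradicts the maximality of $m$ and proves (i). This is where I expect the only real subtlety to sit: the argument genuinely needs $m>0$ (so that $\psi$ is concave, not convex, along the arc), which is exactly what interiority of $x$ buys.

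For (iii), let $\gamma$ be the constant-speed geodesic from $x_{\mathrm{max}}$ to $w$, written in arc length as $\gamma(s)=\cos s\,x_{\mathrm{max}}+\sin s\,u$, $s\in[0,\ell]$, $\ell=\dist(x_{\mathrm{max}},w)<\pi/2$. Then $\psi(s)=m\cos s+(u,y)\sin s$ and $\psi'(0)=(u,y)$. Maximality of $x_{\mathrm{max}}$ over $R$, together with $\gamma(s)\in R$, means $\psi$ has a maximum at the left endpoint $s=0$, forcing $\psi'(0)=(u,y)\le 0$. Consequently $\psi'(s)=-m\sin s+(u,y)\cos s<0$ for $s\in(0,\ell)$, using $m>0$, $\sin s>0$, $\cos s>0$, and $(u,y)\le 0$. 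Thus $(y,\gamma(\cdot))$ is strictly decreasing in arc length, and reparametrizing by an increasing $h\in[0,1]$ preserves monotonicity.

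Finally (ii) is a routine compactness argument (essentially Berge's maximum theorem). Given $y_n\to y$ in $T_x\setminus\{0\}$, set $x_n:=M_x(y_n)$. Any subsequence of $(x_n)\subset R$ has a convergent sub-subsequence $x_{n_k}\to\bar x\in R$; passing to the limit in the inequality $(x_{n_k},y_{n_k})\ge(z,y_{n_k})$, valid for every $z\in R$, gives $(\bar x,y)\ge(z,y)$ for all $z\in R$, so $\bar x$ maximizes $(\cdot,y)$ over $R$ and hence $\bar x=M_x(y)$ by the uniqueness from (i). Since every convergent subsequence of $(x_n)$ has the same limit $M_x(y)$ and $R$ is compact, $x_n\to M_x(y)$, proving continuity.
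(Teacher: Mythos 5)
Your proof is correct, but it takes a genuinely different route from the paper's. The paper proves all three parts through one geometric construction: it projects $R$ orthogonally onto the tangent space $T_x$ via $p(z)=z-(x,z)x$, shows that the image $S=p(R)$ is a \emph{strictly convex} subset of $T_x$ (this is where interiority of $x$ enters there), and then derives (i) from the fact that a nonconstant linear functional on a closed strictly convex set has a unique maximizer, (ii) from an $\epsilon$--$\delta$ argument using strict convexity of $S$, and (iii) from the identity $(y,z)=(y,w_{\mathrm{max}})(z,w_{\mathrm{max}})$ on the great circle through $x_{\mathrm{max}}$ and $w$, checking that both $w_{\mathrm{max}}$ and $-w_{\mathrm{max}}$ lie outside the arc. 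You instead work intrinsically on the sphere with the single analytic fact that $\psi(s)=(\gamma(s),y)$ satisfies $\psi''=-\psi$ along unit-speed geodesics: uniqueness comes from the explicit midpoint evaluation $\psi(L/2)=m/\cos(L/2)>m$, part (iii) from the sign of $\psi'(s)=-m\sin s+(u,y)\cos s$, and (ii) from the standard subsequence-plus-uniqueness (Berge-type) argument. Both proofs hinge on interiority of $x$, but use it differently: the paper uses it to make $S$ strictly convex and to get $(y,x_{\mathrm{max}})>(y,x)=0$, while you use it exactly once, to establish $m>0$, after which positivity drives both the concavity contradiction and the derivative estimate --- you correctly flag this as the one real subtlety. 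Your route is more elementary and quantitative (it even yields $\psi'(s)\le -m\sin s$, hence strict decrease in (iii)), and your compactness proof of (ii) is arguably cleaner. What the paper's projection buys, however, is reusable infrastructure: the convexity of $S_i=p(R_i)$ established inside this proof is invoked again in the separation argument in the proof of Lemma~4 in Section~4, so the paper gets two uses out of one construction, whereas your argument would leave that later step to be proved separately. One line you should add: a geodesic arc contained in $R$ is automatically the minimizing one, of length less than $\pi/2$ (an arc of length at least $\pi/2$ contains two points at geodesic distance at least $\pi/2$, contradicting $\mathrm{diam}\,R<\pi/2$); this justifies both your parametrization with $L=\mathrm{dist}(x_1,x_2)$ in the uniqueness step and the bound $\ell<\pi/2$ in (iii).
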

\begin{proof}
Consider an orthogonal projection $p:\R^{d+1}\to T_x$ given by
$$p(z)=z-(x,z)x.$$
Clearly,
\begin{equation}
\label{S} (z,y)=(p(z),y) \end{equation} for all $z\in R$.

Denote by $S=p(R)$ the image of $R$ under the projection $p$. Since
$\dist(x,z)<\pi/2$ for each $z\in R$, then $p$ is a homeomorphism between
$R$ and $S$ and the inverse map is given by\rm
$$
p^{-1}(u)=u+\sqrt{1-|u|^2}x,\quad u\in S.
$$
Now we will show that $S$ is a strictly convex subset of $T_x$, i.e.
for each pair of distinct points $u$, $v\in S$ and each $h\in(0,1)$ the point
$hu+(1-h)v$ is an interior point of $S$. To this end we note that
\begin{equation}
\label{int1} p^{-1}(hu+(1-h)v)=hp^{-1}(u)+(1-h)p^{-1}(v)+\alpha x,
\end{equation}
where $\alpha>0$. We will use the following simple statement:\\
{\it If $w_1$, $w_2\in S^d$ are such that $(w_1,w_2)>0$, and
$w_3=\alpha_1w_1+\alpha_2w_2\in S^d$ for some $\alpha_1,
\alpha_2>0$, then $w_3$ lies on the shortest geodesic connecting
$w_1$ and $w_2$.}\\
This statement and the fact that $p^{-1}(u),\;p^{-1}(v)\in R$
immediately imply that $z/|z|\in R$, where
$z=hp^{-1}(u)+(1-h)p^{-1}(v)$. Hence, applying again the statement
for $z/|z|$ and $x$ we get by~\eqref{int1} that
$p^{-1}(hu+(1-h)v)\in R$. Moreover, since $x$ is an interior point
of $R$, and $\alpha>0$, then $p^{-1}(hu+(1-h)v)$ is an interior
point of $R$ as well, and therefore $hu+(1-h)v$ is an interior point
of $S$.

To prove (i) we will use the known fact that a nonconstant linear
function given on a closed strictly convex subset in $\R^{d}$
attains its maximum in a unique point. Using this fact we get that
there exists a unique $z_{\mathrm{max}}\in S$ such that
$(z_{\mathrm{max}},y)=\max_{z\in S}(z,y)$. Finally, by~\eqref{S} we
get that $x_{\mathrm{max}}=p^{-1}(z_{\mathrm{max}})$.

Now we will prove (ii). Since $p$ is a homeomorphism it suffices to
show that the composition map $p\,\circ M_x:T_{x}\setminus \{0\}\to
S$ is continuous. Note that $(y,p\,\circ M_x(y))=\max_{z\in S}
(y,z)$. Since $S$ is a closed strictly convex set then for each
$\epsilon>0$ there exists $\delta=\delta(\epsilon)>0$ such that for
all $v\in T_x$ with $|v-y|<\delta$ the diameter of the set $\{z\in
S|(v,z)>(v,p(M_x(y)))\}$ is less than $\epsilon$. Hence,
$|p(M_x(y))-p(M_x(v))|<\epsilon$. Thus, the map $p\circ M_x$ is
continuous at $y$, and so is $M_x$. This proves (ii).

Finally, we prove part (iii) of the lemma. Let $G$ be the great circle
containing $x_\mathrm{max}$ and $w$. There is a unique point
$w_\mathrm{max}\in G$ such that $(y,w_{\mathrm{max}})=\max_{z\in\,
G} (y,z)$. Now for each $z\in G$ we have
\begin{equation}
\label{G} (y,z)=(y,w_\mathrm{max})(z,w_{\mathrm{max}}).
\end{equation}
Hence the scalar product $(y,z)$ is increasing on both geodesic
arcs connecting $-w_\mathrm{max}$ and $w_\mathrm{max}$. The geodesic
$\gamma: [0,1]\to R$ is an arc of $G$. To prove (iii) it is enough
to show that both $-w_\mathrm{max}$ and $w_\mathrm{max}$ are outside
of the arc $\gamma$. The point $w_\mathrm{max}$ is outside of
$\gamma$ by the definition of $x_\mathrm{max}$. Moreover,
$(y,x_{\mathrm{max}})>(y,x)=0$. Therefore, substituting
$z=x_\mathrm{max}$ to~\eqref{G}  we see that
$(x_\mathrm{max},w_\mathrm{max})>0$\rm. Hence,
$$\mathrm{dist}(x_{\mathrm{max}},-w_{\mathrm{max}})>\pi/2.$$ Finally,
the fact that $\mathrm{diam\,} R<\pi/2$ implies that
$-w_{\mathrm{max}}$ is outside of $\gamma$ as well. Thus, the
function $(y,\gamma(h))$ is decreasing on $[0,1]$.  \end{proof}

\section{Proof of Theorem 1}\label{sep}

Fix $t\in\N$. In Section 2 we explained that it is enough to construct an
$N$-tuple of continuous maps $\x_1,\ldots,\x_N: \mathcal{P}_t\to
S^d$ such that
\begin{equation*}
\frac1N\sum_{i=1}^N P(\x_i(P))>0
\end{equation*}
for all $P\in\partial\Omega$ and
\begin{equation*}\mathrm{dist}(\x_i(P),\x_j(P))>\lambda_d
N^{-1/d},\quad 1\le i< j\le N,\end{equation*} for all $P\in\Omega$,
where $\Omega$ is given by~\eqref{omega}.

Fix $\epsilon,\delta,\eta>0$. Consider the function
$$g_\epsilon(t):=\begin{cases}t/\epsilon & \text{if $t\leq \epsilon$,}\\ 1& \mbox{otherwise.}\end{cases} $$
Let $N>t^d(\frac{2\,
K_d}{\eta\, r_d})^d$ and $\mathcal{R}=\{R_1,\ldots,R_N\} $ be an area-regular partition
provided by Proposition~\ref{area-reg+}. For each $i=1,\ldots,N$ choose a point $x_i\in
R_i$ such that $R_i$ contains a spherical cap of radius  $b_d
N^{-1/d}$ with center at $x_i$. Recall that $\|R\|\le K_dN^{-1/d}$.

Let $P\in\p_t$. By Lemma \ref{conv} for each $i=1,\ldots,N$  there
exists a unique $z_i=z_i(P)\in R_i$ satisfying
\begin{equation}\label{z_i}(z_i,\nabla P(x_i))=\max_{x\in R_i}(x,\nabla P(x_i)), \end{equation}
 provided that $\nabla P(x_i)\neq 0$. In the case $\nabla P(x_i)=0$ put $z_i=x_i$. Let $\gamma_{[x_i,z_i]}:[0,1]\to R_i$
  be a geodesic connecting $x_i$ and $z_i$.  We assume that the curve $\gamma_{[x_i,z_i]}$ has an equal-speed parametrization, i.e. the derivative with respect to parameter $h$ satisfies $|\gamma_{[x_i,z_i]}'(h)|=\mathrm{dist}(x_i,z_i)$ for $h\in(0,1)$. Define
\begin{equation}\label{map2}\x_i(P):=\gamma_{[x_i,z_i]}\left((1-\delta)\,g_\epsilon(\,|\,\nabla P(x_i)|\,)\right).\end{equation}
By the definition of $g_\epsilon$ the map $\x_i:\mathcal{P}_t\to S^d$ is continuous in a small neighborhood of the set $\{P\in\p_t| \nabla P(x_i)=0\}$. On the other
hand, part (ii) of Lemma~\ref{conv} implies that $\x_i$ is continuous on the set $\{P\in\p_t: \nabla P(x_i)\neq0\}$. Thus the maps $\x_1,\ldots \x_N$ are
continuous in $\p_t$. The following Lemma \ref{M_x} will finish the
proof of Theorem~\ref{well sep}.

\begin{lemma}\label{M_x} There exist constants $\epsilon,\delta,\eta$ depending only on $d$
such that for each $N>t^d(\frac{2\, K_d}{\eta\, r_d})^d$ the $N$-tuple of
maps $\x_1,\ldots,\x_N: \mathcal{P}_{t}\to S^d$  defined
by~\eqref{map2} satisfies the following properties:\\
\begin{equation}\label{1}
\frac1N\sum_{i=1}^N P(\x_i(P))>0
\end{equation}
for all $P\in\partial\Omega$ and
\begin{equation}\label{2}\mathrm{dist}(\x_i(P),\x_j(P))>\lambda_d
N^{-1/d},\quad 1\le  i< j\le N,\end{equation} for all $P\in\Omega$.
\end{lemma}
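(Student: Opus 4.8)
The goal is to choose the three parameters $\epsilon,\delta,\eta$ depending only on $d$ so that the maps $\x_i$ defined by~\eqref{map2} simultaneously achieve the positivity condition~\eqref{1} and the separation condition~\eqref{2}. These two requirements pull in opposite directions: positivity wants each $\x_i(P)$ to move as far as possible toward the maximizer $z_i$ of $(x,\nabla P(x_i))$, while separation wants each $\x_i(P)$ to stay close to its own center $x_i$ (which sits in a spherical cap of radius $b_dN^{-1/d}$, well inside $R_i$). The factor $(1-\delta)$ in~\eqref{map2} is exactly the knob that keeps $\x_i(P)$ strictly inside $R_i$, and the cutoff $g_\epsilon$ handles the degenerate case $\nabla P(x_i)=0$.

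\textbf{Proving the separation~\eqref{2}.}
First I would dispose of~\eqref{2}, which is the easier and essentially geometric half. Since $\x_i(P)$ lies on the geodesic $\gamma_{[x_i,z_i]}$ at parameter value $(1-\delta)g_\epsilon(\cdot)\le 1-\delta$, and $z_i\in R_i$ with $\dist(x_i,z_i)\le\|\mathcal R\|\le K_dN^{-1/d}$, the point $\x_i(P)$ is trapped in a shrunken copy of $R_i$; more precisely $\dist(\x_i(P),x_i)\le(1-\delta)K_dN^{-1/d}$. On the other hand $x_i$ is the center of a spherical cap of radius $b_dN^{-1/d}$ contained in $R_i$, so by choosing $\delta$ close enough to $1$ (depending only on $d$, $b_d$, $K_d$) one forces $\x_i(P)$ to remain inside this cap, and in particular inside the interior of $R_i$. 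Since the $R_i$ have pairwise disjoint interiors, the points $\x_i(P)$ and $\x_j(P)$ lie in disjoint caps for $i\ne j$; comparing the cap radius $b_dN^{-1/d}$ against the in-cap displacement then yields $\dist(\x_i(P),\x_j(P))>\lambda_dN^{-1/d}$ for an explicit $\lambda_d$. This argument is uniform in $P\in\Omega$ and does not touch $\eta$ at all.

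\textbf{Proving the positivity~\eqref{1}.}
For~\eqref{1} the plan is to compare $\frac1N\sum_iP(\x_i(P))$ with the quantity $\frac1N\sum_i(\x_i(P)-x_i,\nabla P(x_i))$, using that along the geodesic from $x_i$ toward $z_i$ the function $P$ increases at first order with slope controlled by $|\nabla P(x_i)|$; part~(iii) of Lemma~\ref{conv} guarantees the correct monotonicity so that each displacement contributes a \emph{nonnegative} gain of size comparable to $(1-\delta)\,\epsilon^{-1}$-truncated $|\nabla P(x_i)|$ times $\dist(x_i,z_i)$. After a Taylor estimate (bounding the second-order error by $\|\mathcal R\|^2$ times a bound on the Hessian of $P$, which on $\partial\Omega$ is controlled through $\int_{S^d}|\nabla P|\,d\mu_d=1$), the dominant term is a positive multiple of $\frac1N\sum_i|\nabla P(x_i)|$, which by the lower Marcinkiewicz--Zygmund bound~\eqref{Mhaskar-mod} of Lemma~\ref{nabla1} is at least $(1-8d\eta)\int_{S^d}|\nabla P|\,d\mu_d=(1-8d\eta)>0$. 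The role of $\eta$ is precisely to keep this quadrature comparison strictly positive, and the role of $\epsilon$ is to ensure the cutoff $g_\epsilon$ does not kill too much of the gradient mass; one then fixes $\eta,\epsilon$ small and $\delta$ (from the separation step) so that the positive main term dominates all error terms.

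\textbf{Main obstacle.}
The delicate point I expect is the positivity estimate, and specifically reconciling the two parameter constraints. Making $\delta$ close to $1$ is forced by separation, but each factor of $(1-\delta)$ also shrinks the positive first-order gain in~\eqref{1}; I must verify that the \emph{same} $(1-\delta)$ appears linearly in the main term and only needs to beat error terms that are higher order in $\|\mathcal R\|\le K_dN^{-1/d}$ (hence small once $N>t^d(2K_d/\eta r_d)^d$), rather than competing against an $O(1)$ loss. Equivalently, the main work is to show that the truncation $g_\epsilon$ and the retraction $(1-\delta)$ degrade the gain by at most a constant factor depending only on $d$, while the second-order Taylor remainder and the gradient-quadrature defect $8d\eta$ are genuinely small, so that a single choice of $(\epsilon,\delta,\eta)$ depending only on $d$ closes both inequalities for all admissible $N$ and $t$.
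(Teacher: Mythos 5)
Your separation argument is fine as far as it goes, but the positivity half has two genuine gaps, and your parameter regime ($\delta$ close to $1$) points in the opposite direction from what positivity can tolerate. The central gap: expanding $P(\x_i(P))$ to first order around the centers $x_i$ leaves the baseline $\frac1N\sum_{i}P(x_i)$, which your proposal never accounts for. This baseline is not zero; the best generic bound is
$\bigl|\frac1N\sum_i P(x_i)\bigr|=\bigl|\sum_i\int_{R_i}(P(x_i)-P(x))\,d\mu_d(x)\bigr|\le\|\mathcal{R}\|\cdot\frac1N\sum_i\max_{x\in R_i}|\nabla P(x)|\le\|\mathcal{R}\|(1+8d\eta)$ on $\partial\Omega$, i.e.\ the \emph{same} order $N^{-1/d}$ as your first-order gain but with a larger constant: your guaranteed gain is only of size $(1-\delta)\,b_d N^{-1/d}\cdot\frac1N\sum_i|\nabla P(x_i)|$ from the inscribed cap, and $b_d\le K_d$ while your separation step forces $1-\delta\le b_d/(2K_d)$. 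So the possibly negative baseline can swamp the gain, and the constants cannot close. The paper's proof is structured precisely to avoid this: in \eqref{split} the anchor is the in-cell maximum $z_{i,\mathrm{max}}$, and the mean-zero property of $P\in\p_t$ gives $\frac1N\sum_i P(z_{i,\mathrm{max}})=\sum_i\int_{R_i}\bigl(P(z_{i,\mathrm{max}})-P(x)\bigr)d\mu_d(x)$, a sum of \emph{nonnegative} integrands bounded below pointwise by \eqref{P(zi,max)} and \eqref{beta}, yielding the main term $\beta_d\|\mathcal{R}\|(1-8d\eta)$ with no center baseline at all; the three correction sums $P(z_{i,\mathrm{max}})-P(z_i)$, $P(z_i)-P(y_{i,\epsilon})$, $P(y_{i,\epsilon})-P(\x_i(P))$ are then controlled using Lemma~\ref{conv}(iii) and the gradient-difference inequality \eqref{Mhaskar-mod0} --- an entirely first-order argument. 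Your Taylor route is also unjustified as stated: from $\int_{S^d}|\nabla P|\,d\mu_d=1$ a sup-norm bound on the Hessian costs a factor of order $t^{d+1}$ (Nikolskii plus Bernstein), so $\|\mathcal{R}\|^2\sup|\mathrm{Hess}\,P|\sim t^{d-1}$ diverges for $d\ge2$; even a repaired, cell-averaged version (an $L^1$ Bernstein inequality plus a Marcinkiewicz--Zygmund bound for the Hessian entries, none of which is in the paper) produces a remainder of order $\|\mathcal{R}\|$ times a constant, because $\|\mathcal{R}\|\,t\le\eta r_d$ does not decay --- contradicting your ``main obstacle'' resolution, which assumes the errors are higher order in $\|\mathcal{R}\|$.

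On the parameter tension itself, the paper resolves it the opposite way: $\delta=\beta_d/3$ is \emph{small}, and separation is proved not by confining $\x_i(P)$ to the inscribed cap but by showing $\dist(\x_i(P),\partial R_i)\ge\frac{\delta}{2}\sin(b_dN^{-1/d})$, using the orthogonal projection $p$ onto $T_{x_i}$, convexity of $S_i=p(R_i)$, and concavity of the Euclidean distance-to-boundary function on a convex set --- so a point that has travelled a $(1-\delta)$ fraction of the way to $z_i$ still retains a $\delta$-fraction of its initial clearance $\sin r$. In the paper's final estimate \eqref{last} every term is $\|\mathcal{R}\|$ times a constant, and positivity is won by choosing $\eta=\beta_d/(48d)$, $\epsilon=\beta_d/12$, $\delta=\beta_d/3$ all small relative to $\beta_d$; with your $\delta$ near $1$, the corresponding loss (the term $\delta(1+8d\eta)$ in \eqref{last}, or equivalently the retained baseline in your expansion) has constant near $1$, which no main term with constant at most $\beta_d<1$ can dominate. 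Your disjoint-caps separation trick is simpler than the paper's concavity argument, but it is only usable if the positivity half were first repaired along the paper's lines, and even then the regime $\delta$ near $1$ would have to be abandoned.
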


\begin{proof} Fix $P\in\mathcal{P}_t$.
For each $i=1,\ldots,N$ choose $z_{i,\mathrm{max}}\in R_i$ such that
$P(z_{i,\mathrm{max}})=\max_{x\in R_i} P(x)$. Denote
$y_{i,\epsilon}:=\gamma_{[x_i,z_i]}(g_\epsilon(|\nabla P(x_i)|)),$
where $z_i$ and $\gamma$ are as in~\eqref{map2}.  We can split the
sum \eqref{1} into four pieces
\begin{align}\label{split}\frac 1N \sum_{i=1}^N P(\x_i(P))&=\frac 1N \sum_{i=1}^N P(z_{i,\mathrm{max}})\\ &+
\frac 1N \sum_{i=1}^N( P(z_i)- P(z_{i,\mathrm{max}}))\notag\\
&+\frac 1N \sum_{i=1}^N( P(y_{i,\epsilon})- P(z_{i}))
\notag\\&+\frac 1N \sum_{i=1}^N( P(\x_i(P))-
P(y_{i,\epsilon}))\notag.\end{align} We will estimate each of these
sums separately.

Clearly,
\begin{equation}\label{sum P(zi,max)}\frac 1N \sum_{i=1}^N P(z_{i,\mathrm{max}})= \sum_{i=1}^N
\int_{R_i}\big(P(z_{i,\mathrm{max}})-P(x)\big)d\mu_d(x).\end{equation} Now
note that if $z_{i,\mathrm{max}}\not\in\partial R_i$ for some
$i=1,\ldots,N$ then $\nabla P(z_{i,\mathrm{max}})=0$, therefore
\begin{equation}\label{P(zi,max)}P(z_{i,\mathrm{max}})-P(x)\geq \min_{y\in R_i} |\,\nabla P(y)|\,\mathrm{dist}(x,\partial R_i)\end{equation}
for all $i=1,\ldots,N$ and  $x\in R_i$. Let $A_i$ be a spherical cap
of radius $b_dN^{-1/d}$ contained in $R_i$. Since $\|R\|\le
K_dN^{-1/d}$ we obtain that
\begin{equation}\label{beta}\int_{R_i}\mathrm{dist}(x,\partial
R_i)d\mu_d(x)\geq \int_{A_i}\mathrm{dist}(x,\partial A_i)
d\mu_d(x)\geq\beta_d\frac{\|\mathcal{R}\|}{N},
\end{equation}
for some constant
$\beta_d$. Thus, it follows from \eqref{sum P(zi,max)} and
\eqref{P(zi,max)} that
$$\frac 1N \sum_{i=1}^N P(z_{i,\mathrm{max}})\geq \beta_d\frac {\|\mathcal{R}\|}{N}\sum_{i=1}^N\min_{y\in R_i}
|\,\nabla P(y)|.$$ Since by Theorem~\ref{area-reg+} we have
$\|\mathcal{R}\|<\eta\, r_d/(t+1)$, using Lemma \ref{nabla1} we
arrive at
\begin{equation}\label{split1}\frac 1N \sum_{i=1}^N P(z_{i,\mathrm{max}})\geq \|\mathcal{R}\| \,\beta_d\,(1-8d\eta)\int_{S^d}|\,\nabla P(x)|\,d\mu_d(x).
\end{equation}

Next we estimate the sum
$$\frac 1N \sum_{i=1}^N( P(z_{i,\mathrm{max}})-P(z_i) ).$$
Let $\gamma:[0,1]\to R_i$ be a geodesic connecting $z_i$ and $z_{i,\mathrm{max}}$. We can write
$$P(z_{i,\mathrm{max}})-P(z_i)=\int_{0}^{1} (\nabla P(\gamma(h)),\gamma'(h))\,dh\geq 0.$$
By Lemma \ref{conv} (iii) the inequality $(\gamma'(h),\nabla
P(x_i))<0$ holds for all $h\in (0,1)$. Thus, we have
$$\int_{0}^{1} (\nabla P(\gamma(h)),\gamma'(h))dh\leq
\int_{0}^{1} (\nabla P(\gamma(h))-\nabla P(x_i),\gamma'(h))\,dh
$$
$$\leq\int_{0}^{1} |\,\nabla P(\gamma(h))-\nabla P(x_i)|\,|\,\gamma'(h)|\,dh\leq
\mathrm{diam}\, R_i\,\max_{x\in R_i} |\,\nabla P(x)-\nabla P(x_i)|.
$$
Using Lemma \ref{nabla1} we arrive at
\begin{equation}\label{split2}\frac 1N \sum_{i=1}^N|\,P(z_{i,\mathrm{max}})-P(z_i)|\leq \|\mathcal{R}\| \,8d\eta\,\int_{S^d}|\,\nabla P(x)|\,d\mu_d(x). \end{equation}

Now we estimate the third sum in the left-hand side of
\eqref{split}.
Recall that $$y_{i,\epsilon}=\begin{cases}z_i&\mbox{for}\; |\nabla P(x_i)|\geq\epsilon\\
 \gamma_{[x_i,z_i]}(|\nabla P(x_i)|/\epsilon)&\mbox{otherwise}.\end{cases}$$ Hence, we obtain
 $$\frac 1N \sum_{i=1}^N( P(y_{i,\epsilon})- P(z_{i}))=\frac 1N\sum_{i:|\nabla P(x_i)|<\epsilon}( P(y_{i,\epsilon})- P(z_{i})).$$
Since $z_i$ and $y_{i,\epsilon}$ are both in $R_i$, we can write an obvious estimate
$$ |P(y_{i,\epsilon})- P(z_{i})|\leq \mathrm{diam}\, R_i\max_{x\in R_i} |\,\nabla P(x)|.$$
For each $i=1,\ldots,N$ choose $w_i\in R_i$ such that $|\,\nabla P(w_i)|=\max_{x\in R_i} |\,\nabla P(x)|$. Then
$$\frac 1N\sum_{i:|\nabla P(x_i)|<\epsilon}|\,\nabla  P(w_i)|\leq $$ $$\frac 1N\sum_{i:|\nabla  P(x_i)|<\epsilon}\big(|\,\nabla P(w_i)|-|\,\nabla  P(x_i)|\,\big)+\epsilon+\frac 1N\sum_{i:|\nabla P(x_i)|\geq\epsilon}\big(|\,\nabla P(x_i)|-|\,\nabla P(x_i)|\,\big).$$
Thus, Lemma~\ref{nabla1} implies that
$$\frac 1N\sum_{i:|\nabla P(x_i)|<\epsilon}|\,\nabla  P(w_i)|\leq\epsilon+8d\eta\int_{S^d}|\,\nabla P(x)|\,d\mu_d(x). $$ Hence, we arrive at
\begin{equation}\label{split3}\left|\frac 1N \sum_{i=1}^N\big( P(y_{i,\epsilon})- P(z_{i})\big)\right|\leq \|\mathcal{R}\|\,\Big(\epsilon + 8d\eta\,\int_{S^d} |\,\nabla P(x)|\,d\mu_d(x)\Big).\end{equation}

It remains to estimate the sum
$$\frac 1N \sum_{i=1}^N( P(\x_{i}(P))-P(y_{i,\epsilon}) ).$$
The distance between $\x_i(P)$ and $y_{i,\epsilon}$ is less than $\delta\,\|\mathcal{R}\|$. Hence,
\begin{equation*}\left|\frac 1N \sum_{i=1}^N( P(\x_{i}(P))-P(y_{i,\epsilon}) )\right|\leq\frac {\delta\,\|\mathcal{R}\|}{N} \sum_{i=1}^N\max_{x\in R_i} |\,\nabla P(x)|. \end{equation*}
Using again Lemma \ref{nabla1} we arrive at

\begin{equation}\label{split4}\left|\frac 1N \sum_{i=1}^N( P(\x_{i})-P(y_{i,\epsilon})
)\right|\leq\|\mathcal{R}\|\,\delta\,(1+8d\eta)\int_{S^d} |\,\nabla P(x)|\,d\mu_d(x). \end{equation}

Now for $P\in\partial\Omega$, we get by \eqref{split}, \eqref{split1}, \eqref{split2},
\eqref{split3}, and \eqref{split4} that
\begin{equation}\label{last}\frac 1N \sum_{i=1}^N P(\x_i(P))\geq \|\mathcal{R}\|(
\beta_d(1-8d\eta)-8d\eta-(8d\eta+\epsilon)-\delta(1+8d\eta)).\end{equation}

Take $\eta=\beta_d/(48 d), \delta=\beta_d/3$ and
$\epsilon=\beta_d/12$, where $\beta_d$ is provided by~\eqref{beta}.
Without loss of generality we may assume that $\beta_d<1$. Thus we
get
$$ \beta_d(1-8d\eta)-8d\eta-(8d\eta+\epsilon)-\delta(1+8d\eta)>0,$$
which together with~\eqref{last} imply~\eqref{1}.

It remains to show the separation property~\eqref{2}. Fix $P\in
\Omega$ and $i\in\overline{1,\ldots,N}$. By the
definition~\eqref{map2}, $\x_i(P)$ is in $R_i$. Thus to
prove~\eqref{2} it is enough to show that $\dist(\x_i(P),\partial
R_i)\ge\lambda_dN^{-1/d}$ for some constant $\lambda_d$. Recall that
$\x_i(0)=x_i$, and $R_i$ contains a spherical cap of radius $r=b_d
N^{-1/d}$ with center at $x_i$. The main reason why $x_i(P)$ is
``far away'' from the boundary $\partial R_i$ is because $\x_i(P)$
lies on the geodesic $\gamma_{[x_i,z_i]}$, where $z_i\in R_i$, and
$$\dist(x_i,\x_i(P))\leq(1-\delta)\dist(x_i,z_i).$$
We will also use the fact that $R_i$ is geodesically convex and
contains a spherical cap of ``big'' radius with center at $x_i$.

Denote by $T_{x_i}$ the space of all vectors in $\R^{d+1}$
orthogonal to $x_i$ and let $p:\R^{d+1}\to T_{x_i}$ be the
orthogonal projection $$p(z)=z-(x_i,z)x_i.$$ As we have pointed out
in Lemma~\ref{conv} the image $S_i=p(R_i)$ is a convex subset in
$T_{x_i}$. Clearly, $$\dist(\x_i(P),\partial R_i)\ge
\dist_{\mathrm{euc}}(p(\x_i(P)),\partial S_i),$$ where $\dist_{\mathrm{euc}}(z,\partial S_i)$ stands for
the Euclidean distance between point $z$ and the set $\partial S_i$
in $T_{x_i}$. The point $p(\x_i(P))$ lies between the points
$p(x_i)=0$ and $p(z_i)$ on the line connecting them. Thus we have
\begin{equation}\label{sep1}
|p(x_i)-p(z_i)|=\sin(\dist(x_i,z_i)),\end{equation} and
\begin{equation}\label{sep2}|p(x_i)-p(\x_i(P))|\leq\sin((1-\delta)\dist(x_i,z_i)).
\end{equation}
Moreover, the fact that $R_i$ contains the spherical cap $A(x_i,r)$
implies that
\begin{equation}\label{sep3}\dist_{\mathrm{euc}}(p(x_i),\partial S_i)\ge \sin r.
\end{equation}
Finally we note that the function $\dist_{\mathrm{euc}}(z,\partial S_i)$ is concave on
$S_i$. Therefore by~\eqref{sep1}-\eqref{sep3} we get
$$
\dist(\x_i(P),\partial R_i)\ge \frac{\delta}2\sin
r\ge\lambda_dN^{-1/d},
$$
which implies~\eqref{2}. Lemma \ref{M_x} is proved.

\end{proof}
\noindent
{\bf Acknowledgements.}
The authors thank the Mathematisches Forschungsinstitut Oberwolfach for their
hospitality during the preparation of this manuscript and for providing a stimulating atmosphere for research.
This paper is partially supported by the Centre for Advanced Study at the Norwegian Academy of Science and Letters in Oslo.

{\footnotesize \noindent Department of Mathematical Analysis, Taras
Shevchenko National University of Kyiv,  Volodymyrska~64, 01033~Kyiv, Ukraine\\
and\\
Department of Mathematical Sciences, Norwegian University of Science and Technology, NO-7491 Trondheim, Norway\\
{\it Email address: andriybond@gmail.com}\\
\vspace{0.2cm}

\noindent
Max Planck Institute for Mathematics, Vivatsgasse 7, 53111 Bonn, Germany\\and\\
Department of Mathematical Analysis, Taras
Shevchenko National University of Kyiv,  Volodymyrska~64, 01033~Kyiv, Ukraine\\
{\it Email address: danrad@mpim-bonn.mpg.de}\\
\vspace{0.2cm}

\noindent
Max Planck Institute for Mathematics, Vivatsgasse 7, 53111 Bonn, Germany\\
{\it Email address: viazovska@gmail.com}}

\end{document}